\newcounter{theorem}
\newtheorem{theorem}{Theorem}
\newtheorem{lemma}{Lemma}
\newtheorem{example}{Example}
\newtheorem{remark}{Remark}
\newtheorem{definition}{Definition}
\newenvironment{proof}[1][Proof]{\textbf{#1.} }{\rule{0.5em}{0.5em}}
\title{TUBE FORMULA FOR SELF-SIMILAR FRACTALS WITH NON-STEINER-LIKE GENERATORS \date{}}
\author{Ali DENİZ \thanks{Corresponding Author.}  \footnote{Anadolu
University, Science Faculty, Department of Mathematics, 26470,
Eskişehir, Turkey,
\hspace{5cm}e-mails:$\quad$
adeniz@anadolu.edu.tr, skocak@anadolu.edu.tr, yunuso@anadolu.edu.tr,
aeureyen@anadolu.edu.tr}
 \and Şahin KOÇAK $^*$ \and Yunus ÖZDEMİR $^*$ \and A. Ersin
ÜREYEN $^*$}
\begin{document}
\maketitle 

\pagestyle{myheadings} \markboth{Tube Formula for Self-Similar Fractals}{Tube Formula for Self-Similar Fractals}

\begin{abstract}
We give a direct, pointwise proof for the tube formula of Lapidus-Pearse for self-similar fractals, where we allow non-convex, non-Steiner-like generators.
\end{abstract}

\textbf{Keywords:}{ Self-similar fractals, tube formula, complex dimensions,
 self-similar tiling, zeta functions.}
\section{Introduction}

M. Lapidus and E. Pearse proved in \cite{lape1} a tube formula for
higher dimensional fractals, extending the earlier work of Lapidus
and his coworkers \cite{LaFra}. They associate certain tilings
with fractals and express the volume of the inner
$\varepsilon$-neighborhood of the tiling in terms of residues of
a certain associated function, called the geometric
$\zeta$-function.  The residues thereby are taken at the complex
dimensions of the fractal, a notion introduced and elaborated by
Lapidus and his coworkers.

A recent work  of E. Pearse and S. Winter \cite{pewin} clarifies
the relationship between the inner $\varepsilon$-neighborhood of
the tiling and the genuine $\varepsilon$-neighborhood of the
fractal in a very satisfactory way: If the boundary of the convex hull of the fractal is a subset of the fractal, then the volume of the $\varepsilon$-neighborhood of the fractal is
the sum of the volumes of the inner $\varepsilon$-neighborhood of the
tiling and the outer $\varepsilon$-neighborhood of the convex hull
of the fractal.

As the volume of the outer $\varepsilon$-neighborhood of the
convex hull is rather trivial, the tube formula in terms of
residues gives effectively the true volume of the
$\varepsilon$-neighborhood of the fractal if the Pearse-Winter
condition is fulfilled. This circumstance attributes a higher
value to the utility of the tube formula.

The proof of the tube formula of Lapidus-Pearse is distributional
and they assume Steiner-like generators. Moreover, as their theory
goes beyond the self-similar fractals, the proof of the general
tube formula for fractal strings is rather involved.

In \cite{bizim}, we generalized part of their theory to the
graph-directed fractals, defined complex dimensions for them and
gave a general scheme of summation yielding tube formulas for
self-similar as well as graph-directed fractals.

In the present note we want to give a direct and pointwise proof
for the tube formula of L-P for self-similar fractals, whereby we
consider non-convex and non-Steiner-like generators also.

In the next section we very briefly recall the approach  of L-P
and in the third section we give the proof of the tube formula.

\section{The Tube Formula for Self-Similar Tilings}

Let \[F=\bigcup \limits_{j=1}^J \Phi_j(F)=: \Phi (F) \subset
\mathbb{R}^d \] be a self-similar fractal, where
$\Phi_j:\mathbb{R}^d \rightarrow \mathbb{R}^d$ are similitudes
with scaling ratios $0<r_j<1$, $j=1,\dots,J$. Consider the convex
hull $C:=[F]$ of the fractal (for which we assume $\dim C=d$). We
assume that the system $\{\Phi_j\}$ satisfies the open set
condition with ${\rm int \, } C $ a feasible open set. This condition
is also called the tileset condition in \cite{pewin}. Additionally we
assume the non-triviality condition of \cite{pewin}:
\[{\rm int\,}C \nsubseteq \Phi(C)= \bigcup \limits_{j=1}^J
\Phi_j(C).\] This condition amounts to ${\rm int\,} F=\emptyset$.

Now define $T_1={\rm int}(\,C \setminus \Phi (C) ) $ and its iterates $T_n=\Phi^{n-1}
(T_1)$, $n=2,3,\dots $ (see \cite{pearse}).

The tiling of the self-similar system is given by \[\mathcal{T}:=\left
\{ T_n \right \}_{n=1}^{\infty}\] and the volume of the inner $\varepsilon$-neighborhood
 of the tiling $\mathcal{T}$ is defined by
\[V_{\mathcal{T}}(\varepsilon):=\sum_{n=1}^{\infty}
V_{T_n}(\varepsilon),
\] where $V_{T_n}(\varepsilon)$ is the volume of the inner
$\varepsilon$-neighborhood of $T_n$.

 To state the tube formula we need some additional assumptions and definitions.
 Assume that $T_1$ is the union of finitely many (connected) components,
 $T_1=G_1 \cup G_2 \cup \cdots \cup G_Q$, called the generators of the tiling.
 In the work of L-P they assume the generators to be  Steiner-like in the following
 sense:

 A bounded, open set $G \subset \mathbb{R}^d$ is called (diphase) Steiner-like
if the volume $V_G(\varepsilon)$ of the inner
$\varepsilon$-neighborhood of $G$ admits an expression of the form
\[ V_{G}(\varepsilon)= \sum_{i=0}^{d-1}
\kappa_i(G)\varepsilon^{d-i},\qquad \text{ for } \varepsilon <g,
\] where  $g$ denotes the  inradius of $G$, i.e. supremum of the radii
of the balls contained in $G$.

For $\varepsilon \geq g$ we have
$V_{G}(\varepsilon)={\rm{volume}}(G)$ which is denoted by
$-\kappa_d(G)$, the negative sign being conventional \cite{lape1}.

To be precise, L-P consider also the ``pluriphase'' case, where $V_G( \varepsilon)$
is given piecewise by different polynomials in the region $0< \varepsilon < g$. But
 this generalization brings no essential complication.

Lapidus-Pearse introduce the following ``scaling
$\zeta$-function'':
\begin{definition}\label{scalingzeta}
The scaling $\zeta$-function of the self-similar fractal is
defined by
\[\zeta(s)=\sum_{k=0}^\infty \sum_{w \in W_k}r_w^s,\] where $W_k$
is the set of words $w=w_1w_2\cdots w_k$ of length $k$ (with
letters from $\{1,2,\dots ,J\}$) and $r_w=r_{w_1}r_{w_2}\dots
r_{w_k}$.
\end{definition}

A simple calculation shows that $\zeta(s)$ can be
expressed as \cite[Theorem 2.4]{LaFra}
\begin{equation}
\label{moran} \zeta(s)=\frac{1}{1-\sum_{j=1}^Jr_j^s} \quad \text{
for } {\rm {Re}} (s)> D,
\end{equation}
where $D$ is the \emph{similarity dimension} of the system (i.e. the
unique real root of the Moran equation $1-\sum_{j=1}^Jr_j^s=0$ which
coincides with the Minkowski and Hausdorff dimensions if the open-set
condition holds). We assume $d-1 < D <d$.

\begin{definition}
The set $\mathfrak{D}:=\{ \omega \in \mathbb{C} \mid \zeta (s)  \text {
has a pole at } \omega \}$ is called the set of complex dimensions
of the self-similar fractal.
\end{definition}

Lapidus-Pearse define a second type of ``$\zeta$-functions'' associated
with the tiling and related to the geometry of the (diphase) Steiner-like
generators. As the case of multiple generators does not bring additional complication, we assume that there is a single generator $G$.

\begin{definition}
 The geometric $\zeta$-function $\zeta_{\mathcal{T}}(s,\varepsilon)$ associated
 with the generator $G$ is defined by \[\zeta_{\mathcal{T}}(s,\varepsilon):=
\zeta(s) \varepsilon ^{d-s}  \sum_{i=0}^d
\frac{g^{s-i}}{s-i}\kappa_i(G).\]
\end{definition}

The formula of Lapidus-Pearse for
$V_{\mathcal{T}}(\varepsilon)$ now reads as follows:

\begin{theorem}[Distributional tube formula for tilings of self-similar fractals, \cite{lape1}]
\begin{equation}
V_{\mathcal{T}}
(\varepsilon)=\sum_{\omega \in \mathfrak{D}_{\mathcal{T}}}
{\rm{res}} (\zeta_{\mathcal{T}}(s,\varepsilon);\omega),
\end{equation}
where $\mathfrak{D}_{\mathcal{T}}=\mathfrak{D} \cup \{0,1,\dots,d-1\}$.
\end{theorem}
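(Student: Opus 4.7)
The plan is to compute $V_{\mathcal{T}}(\varepsilon)$ directly from the self-similar tiling, identify the result with an inverse Mellin transform of $\zeta_{\mathcal{T}}(s,\varepsilon)$ along a well-chosen vertical line, and then shift the contour leftward to convert it into a sum of residues. First I would decompose $T_n = \bigcup_{w \in W_{n-1}} \Phi_w(G)$ and use the scaling relation $V_{\Phi_w(G)}(\varepsilon) = r_w^{\,d}\,V_G(\varepsilon/r_w)$. Inserting the Steiner-like polynomial for $V_G$ on $(0,g)$ and the value $-\kappa_d(G)$ on $[g,\infty)$, then grouping the sum over all words $w$ according to whether $r_w > \varepsilon/g$ or $r_w \le \varepsilon/g$ (and noting that $\sum_w r_w^{\,d} = \zeta(d)$ converges because $D<d$), one arrives at the closed form
\[
V_{\mathcal{T}}(\varepsilon)\;=\;\sum_{w:\,r_w>\varepsilon/g}\sum_{i=0}^{d-1}\kappa_i(G)\,r_w^{\,i}\,\varepsilon^{d-i}\;-\;\kappa_d(G)\!\sum_{w:\,r_w\le\varepsilon/g}\!r_w^{\,d}.
\]

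Next I would invoke the Perron-type identity
\[
\frac{1}{2\pi i}\int_{c-i\infty}^{c+i\infty}\frac{x^{s}}{s-i}\,ds\;=\;\begin{cases} x^{i} & \text{if } c>i,\ x>1,\\ -x^{i} & \text{if } c<i,\ x<1,\\ 0 & \text{otherwise,}\end{cases}
\]
applied term-by-term with $x=g\,r_w/\varepsilon$, summed over all words $w$. Choosing a vertical line $\operatorname{Re}(s)=c$ with $D<c<d$ makes the series $\sum_w r_w^{\,s}=\zeta(s)$ converge absolutely, so sum and integral may be exchanged. The integer poles $s=0,1,\dots,d-1$ lie to the left of $c$ and deliver the first double sum above, while the pole at $s=d$ lies to the right of $c$ and delivers the second sum with its conventional minus sign. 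This identifies
\[
V_{\mathcal{T}}(\varepsilon)\;=\;\frac{1}{2\pi i}\int_{c-i\infty}^{c+i\infty}\zeta_{\mathcal{T}}(s,\varepsilon)\,ds.
\]

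Finally I would shift this contour leftward past every remaining pole of the integrand. The poles crossed are exactly the complex dimensions $\omega\in\mathfrak{D}$ of $\zeta$ and the integers $\{0,1,\dots,d-1\}$ arising from the factors $1/(s-i)$ with $i\le d-1$; the integer pole at $s=d$ is never crossed. Summing the residues so collected yields precisely the right-hand side of the theorem, with $\mathfrak{D}_{\mathcal{T}} = \mathfrak{D}\cup\{0,1,\dots,d-1\}$. The hard part will be justifying this contour deformation: one needs a bound on $\zeta_{\mathcal{T}}(s,\varepsilon)$ along the horizontal segments that close the rectangles and a control showing that a vertical line far to the left contributes nothing. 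For self-similar systems with infinitely many complex dimensions this forces a screen/window argument in the spirit of Lapidus--van Frankenhuijsen together with a languidity-type bound on $\zeta(s)$ along vertical lines avoiding $\mathfrak{D}$; the factor $\varepsilon^{d-s}$, which becomes small as $\operatorname{Re}(s)\to-\infty$ (for $\varepsilon$ small), furnishes the decay that ultimately kills the pushed-out contour.
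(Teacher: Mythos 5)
Your outline is essentially the paper's own route: identify $V_{\mathcal{T}}(\varepsilon)$ with $\frac{1}{2\pi \mathbbm{i}}\int_{c-\mathbbm{i}\infty}^{c+\mathbbm{i}\infty}\zeta_{\mathcal{T}}(s,\varepsilon)\,ds$ for $D<c<d$, then shift the contour left past $\mathfrak{D}\cup\{0,1,\dots,d-1\}$ (the pole at $d$ staying to the right, exactly as you note). The paper packages the first step as a Mellin transform of the two-variable function $V_G(x,\varepsilon)$ and an inverse-Mellin representation, whereas you derive the explicit split sum over $r_w\gtrless \varepsilon/g$ and apply Perron's formula term by term; these are the same computation in different clothing, and your closed form for $V_{\mathcal{T}}(\varepsilon)$ is correct. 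Your handling of the deformation — a bound on the horizontal segments at well-chosen heights (the languid/screen estimate $|\zeta(\sigma\pm\mathbbm{i}\tau_n)|\le M$) plus showing the far-left vertical line contributes nothing (the paper does this with a semicircle and Jordan's lemma, which is where the restriction $\varepsilon<g$, resp.\ $\varepsilon<h$, enters) — names the right tools, though you leave the estimates unexecuted.

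The one step whose stated justification would not survive scrutiny is the interchange of $\sum_w$ with the Perron integrals. Absolute convergence of $\sum_w r_w^s$ on $\operatorname{Re}(s)=c$ is not sufficient: the integrands $\frac{x^s}{s-i}$ decay only like $1/|t|$ along the vertical line, so the integrals converge merely conditionally and Fubini does not apply directly to the doubly infinite line. The paper devotes its entire appendix to this point, first exchanging sum and integral on truncated segments $[c-\mathbbm{i}R,c+\mathbbm{i}R]$ (where Fubini is legitimate) and then passing to $R\to\infty$ by dominated convergence, using uniform-in-$R$ Perron-type bounds $\bigl\vert\frac{1}{2\pi\mathbbm{i}}\int_{a-\mathbbm{i}R}^{a+\mathbbm{i}R}\frac{x^s}{s-j}\,ds\bigr\vert\le \frac{x^a}{\pi|\log x|}+\cdots$ together with $\sum_w r_w^c<\infty$. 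Your argument needs this (or an equivalent) inserted; once it is, the proof goes through.
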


In this work however, we want to do a first step into the
non-Steiner-like realm. We shall assume the volume
$V_G(\varepsilon)$ of a generator $G$ to have the following form:
\begin{equation} \label{vegeepsilon}
 V_G(\varepsilon)=\left \{ \begin{array}{ccc}
\displaystyle \sum_{i=0}^{d-1} \kappa_i(G)\varepsilon^{d-i} &\text{ for } 0<\varepsilon < h   \\
\displaystyle  \lambda_G(\varepsilon) & \text{ for } h\leq \varepsilon \leq g , \\
\end{array} \right .
\end{equation} where $\lambda_G$ is a continuous and piecewise continuously differentiable
function ($g$ denotes the inradius throughout). We will denote ${\rm Vol} (G)$ again by $-\kappa_d(G)$.

For additional simplicity we also assume that there is a single
generator $G$. ( In the case of multiple generators we have to
apply the formula to each generator separately and add them up.)
\begin{figure}[h]
\begin{center}
\begin{pspicture}(0,0)(10,5)
\psset{unit=0.9cm}
\psline[linewidth=0.7pt](0,2)(3,0)(7,1.5)(10,0)(8,3)(9,5)(5,3)(3,5)(2.5,2.5)(0,2)
\rput(5.2,0.5){$a_1$} \rput(8.2,0.55){$a_2$} \rput(9,2){$a_3$}
\uput[d](3,0){$A_1$}\uput[d](7,1.5){$A_2$}\uput[d](10,0){$A_3$}\uput[l](0,2){$A_n$}
\rput(1.3,2.6){$a_{n-1}$} \rput(1.3,0.7){$a_{n}$}

\psarc(3,0){0.5}{19}{145} \uput[u](3,0.5){$\alpha_1$}
\psarc(7,1.5){0.3}{332}{200} \rput[u](7,2.1){$\alpha_2$}
\psarc(10,0){0.7}{125}{152} \rput[u](9.2,0.75){$\alpha_3$}
\psarc(2.5,2.5){0.3}{189}{80} \rput[r](3.9,2.4){$\alpha_{n-1}$}
\psarc(0,2){0.7}{327}{11} \rput(1,1.8){$\alpha_{n}$}
\end{pspicture}
\caption{}\label{Fig:polygon}
\end{center}
\end{figure}
\begin{remark}
For any (not self-intersecting) polygon $P$ in the plane, the
volume of the inner $\varepsilon$-neighborhood of $P$ is given by
a quadratic polynomial
 in $\varepsilon$ for sufficiently small $\varepsilon$. Let the polygon $P$ has
the vertices  $A_1,A_2,\dots,A_n$ ( with $A_{n+1}=A_1$) in a
certain successive ordering. Denote the length of the edge
$A_iA_{i+1}$ with $a_i$ and the inner angle at $A_i$ by
$\alpha_i,\,\, (i=1,2,\dots,n)$, see Fig.\ref{Fig:polygon}. Then
it can be shown that for small $\varepsilon$
\[V_P(\varepsilon)=  \left(\sum_{i=1}^n a_i \right)\varepsilon -
\left(\sum_{i=1}^n  \delta_i\right)\varepsilon^2,\] where
\[\delta_i=\left\{
  \begin{array}{cc}
   \displaystyle\frac{1+\cos(\alpha_i)}{\sin{\alpha_i}} &\text{ for } 0<\alpha_i<\pi \\
   \displaystyle\frac{\pi-\alpha_i}{2} &\text{ for } \pi \leq \alpha <
   2\pi .
 \end{array}
 \right . \]
\end{remark}

\begin{figure}[h]
\begin{center}
\includegraphics[height=7cm]{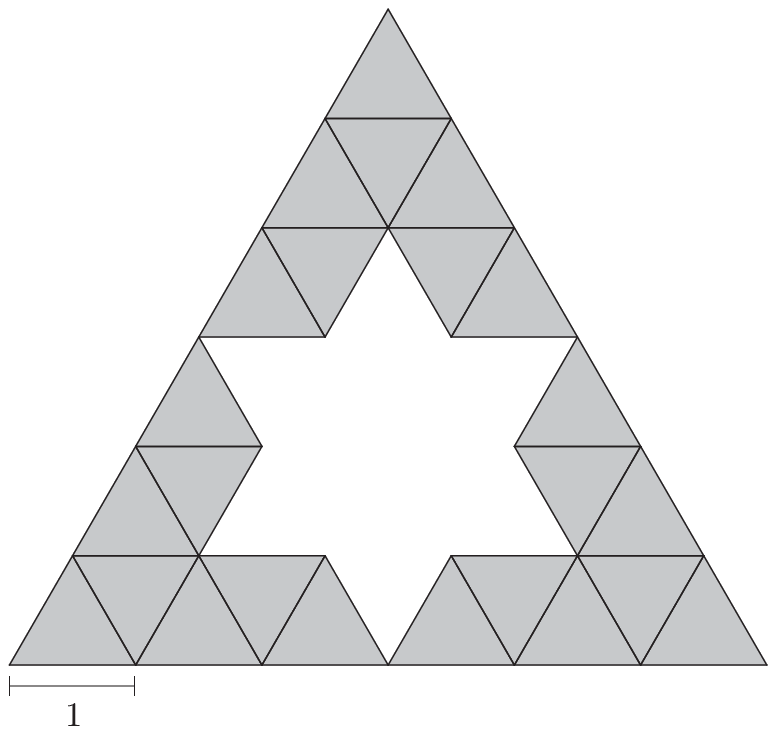}
\end{center}
\caption{}\label{Fig:maps}
\end{figure}
\begin{figure}[h]
\begin{center}
\includegraphics[height=7cm]{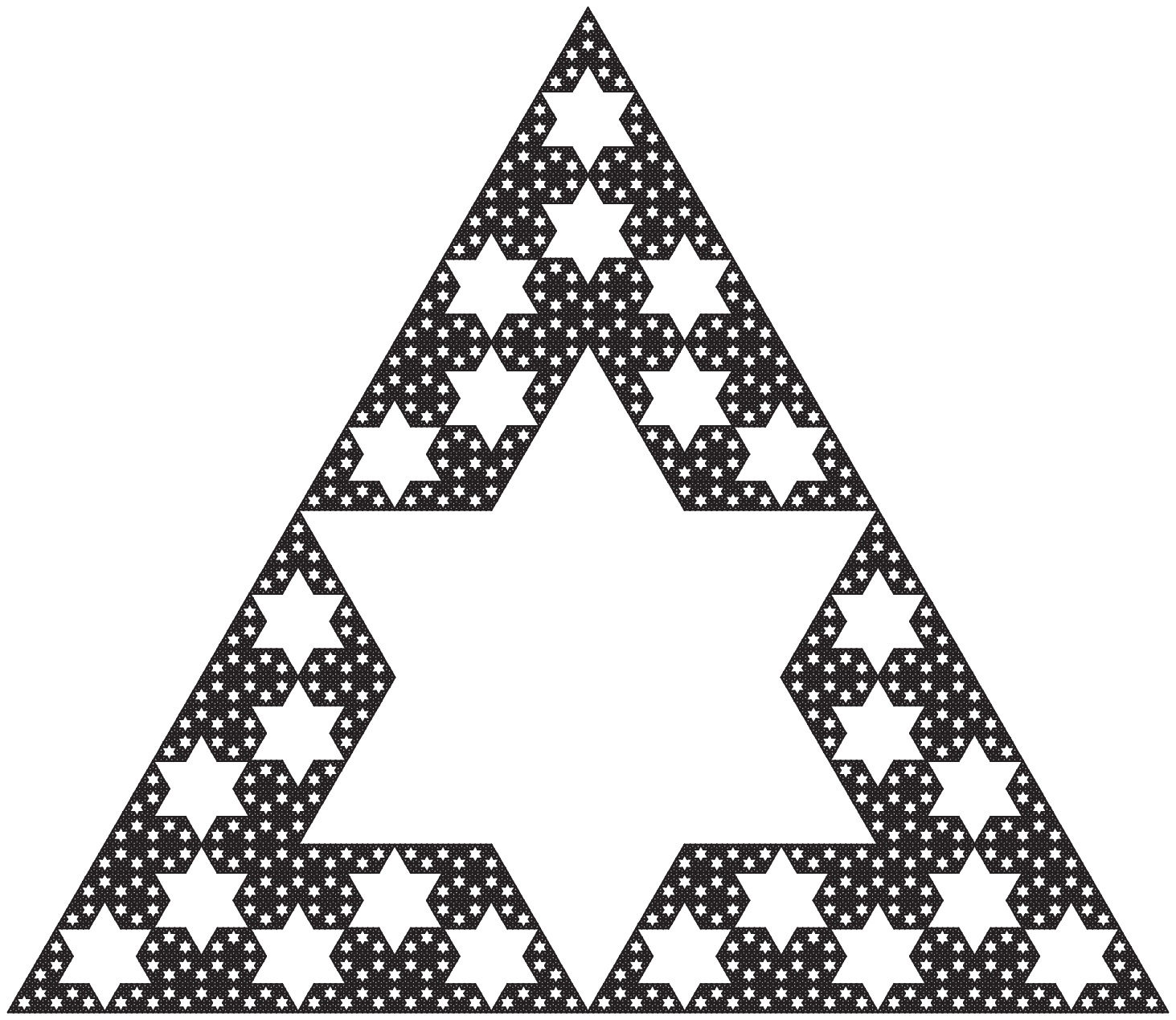}
\end{center}
\caption{}\label{Fig:fractal}
\end{figure}

\begin{example}
Consider the iterated function system $\Phi=\{\phi_j\}_{j=1}^{24}$ on $\mathbb{R}^2$ with scaling ratios $\frac{1}{6}$ as indicated in Fig.\ref{Fig:maps}. The associated self-similar fractal is shown in Fig.\ref{Fig:fractal}. This system satisfies the Pearse-Winter condition and the generator is a non-convex polygon $P$ ( with
$a_i=1,\,i=1,2,\dots,24;\,\alpha_i =\frac{\pi}{3}$ for odd $i$ and
$\alpha_i=\frac{4\pi}{3}$ for even $i$), see Fig.\ref{Fig:star}.

By the formula above we have $V_P(\varepsilon)=12
\varepsilon-(6\sqrt{3}-\pi)\varepsilon^2 \text{ for small }
\varepsilon$. More precisely one can compute\[
V_G(\varepsilon)=\left \{ \begin{array}{cc} 12
\varepsilon-(6\sqrt{3}-\pi)\varepsilon^2  & \text{ for } 0<\varepsilon < \frac{1}{\sqrt{3}} \\
\lambda_P(\varepsilon)
&\text{ for } \frac{1}{\sqrt{3}}\leq \varepsilon \leq 1, \\
\end{array}
\right .\]
where $\lambda_P(\varepsilon)=\frac{6\sqrt{3}+3\sqrt{16\varepsilon^2-4}}{4}+6\varepsilon^2
\arcsin\left(\frac{2\sqrt{3}-\sqrt{16\varepsilon^2-4}}{8\varepsilon}\right)$. (Note that $\kappa_0(P)=\pi-6\sqrt{3}, \, \kappa_1(P)=12$ and $\kappa_2(P)=-3 \sqrt{3}$.)
\end{example}

\begin{figure}[h]
\begin{center}
\begin{pspicture}(-2,-0.5)(2,5)
\psset{unit=1.3cm} 
\SpecialCoor
\def \ok{
\psline[linewidth=0.5pt](0,0)(1;60)
\psline[linewidth=0.5pt](0,0)(1;120)
\qdisk(0,0){1.2pt} \qdisk(1;60){1.2pt}  }  \ok \rput{60}(1.73;30){\ok}
\rput{120}(3;60){\ok} \rput{180}(3.46;90){\ok}
\rput{240}(3;120){\ok} \rput{300}(1.73;150){\ok}
\psarc[linewidth=0.5pt](0,0){0.4}{60}{120}
\psarc[linewidth=0.5pt](1;60){0.2}{0}{240}
\uput[d](0,0){$A_1$}\uput[dr](1.73;30){$A_3$}\uput[ur](3;60){$A_5$}
\uput[u](3.46;90){$A_7$}\uput[ul](3;120){$A_9$}\uput[dl](1.73;150){$A_{11}$}
\uput[dr](1;60){$A_2$}\uput[r](2;60){$A_4$}
\uput[dl](0.98;115){$A_{12}$}\uput[l](2;120){$A_{10}$}
\uput[ur](0.4,2.65){$A_6$}\uput[ul](-0.4,2.65){$A_8$}
\rput(0,0.6){$\frac{\pi}{3}$}\rput(0.5,1.3){$\frac{4\pi}{3}$}

\end{pspicture}
\caption{}\label{Fig:star}
\end{center}
\end{figure}
We still use the notion of scaling $\zeta$-function and the
associated complex dimensions in the sense of L-P.  Since the
geometric $\zeta$-function depends on the geometry of the
generators, we want to define a new geometric $\zeta$-function
taking into account the type of generators satisfying (\ref{vegeepsilon}).

\begin{definition}
The geometric $\zeta$-function
$\zeta_{\mathcal{T}}(s,\varepsilon)$ associated with the generator
$G$ satisfying (\ref{vegeepsilon}) is defined by
\begin{equation*}
\zeta_{\mathcal{T}}(s,\varepsilon):=\zeta(s)\varepsilon^{d-s}
\left ( \sum_{i=0}^{d-1}
\frac{h^{s-i}}{s-i}\kappa_i(G)+\frac{g^{s-d}}{s-d}\kappa_d(G)+\Lambda(s)
\right ),
\end{equation*}
where $\Lambda(s)$ is an entire function given by  \[\Lambda(s)=\int_h^g u^{s-d-1}\lambda_G(u)du.\]
\end{definition}
Now we can state our version of the L-P formula for generators
satisfying the condition (\ref{vegeepsilon}):

\begin{theorem}\label{bizimteo}
\begin{equation*}
V_{\mathcal{T}} (\varepsilon)=\sum_{\omega \in
\mathfrak{D}_{\mathcal{T}}} {\rm{res}}
(\zeta_{\mathcal{T}}(s,\varepsilon);\omega) \quad \text{ for }\varepsilon < h,
\end{equation*}
where $\mathfrak{D}_{\mathcal{T}}=\mathfrak{D} \cup
\{0,1,\dots,d-1\}$.
\end{theorem}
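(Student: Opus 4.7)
The plan is to rewrite $V_{\mathcal{T}}(\varepsilon)$ as a single inverse Mellin contour integral whose integrand is already $\zeta_{\mathcal{T}}(s,\varepsilon)$, and then to shift the contour leftward past the poles, collecting residues. By self-similarity each $T_n=\Phi^{n-1}(T_1)$ decomposes into similar copies of $G$ with scaling ratios $r_w$, $w\in W_{n-1}$, so that by isometry invariance and the scaling relation $V_{rG}(\varepsilon)=r^d V_G(\varepsilon/r)$,
\begin{equation*}
V_{\mathcal{T}}(\varepsilon)=\sum_{k=0}^{\infty}\sum_{w\in W_k} r_w^d\,V_G(\varepsilon/r_w),
\end{equation*}
a double sum that converges absolutely because $V_G\le \mathrm{Vol}(G)$ is bounded and $\sum_w r_w^d$ converges (since $d>D$).

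Next I would extend $V_G$ to $u>g$ by $V_G(u)=-\kappa_d(G)$ and compute the Mellin-type transform $M(s):=\int_0^{\infty} u^{s-d-1}V_G(u)\,du$. Splitting the integral at $h$ and $g$ and using (\ref{vegeepsilon}), a short direct calculation shows that on the strip $d-1<\mathrm{Re}(s)<d$ the transform $M(s)$ equals precisely the parenthesized factor in the definition of $\zeta_{\mathcal{T}}(s,\varepsilon)$, which I will denote $\phi(s)$. The substitution $u=\varepsilon/r$ in Mellin inversion then yields
\[ r^d V_G(\varepsilon/r)=\frac{1}{2\pi i}\int_{\mathrm{Re}(s)=c} r^s\,\varepsilon^{d-s}\,\phi(s)\,ds \]
for every $c\in(d-1,d)$. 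Choosing $c\in(D,d)$ and interchanging sum with integral — justified by absolute convergence of $\sum_w r_w^c$ together with the vertical-line decay of $\phi$ — produces
\[ V_{\mathcal{T}}(\varepsilon)=\frac{1}{2\pi i}\int_{\mathrm{Re}(s)=c}\zeta(s)\,\varepsilon^{d-s}\,\phi(s)\,ds=\frac{1}{2\pi i}\int_{\mathrm{Re}(s)=c}\zeta_{\mathcal{T}}(s,\varepsilon)\,ds. \]
A leftward shift of this contour, via the residue theorem, then sweeps across exactly the complex dimensions $\mathfrak{D}$ (poles of $\zeta$) together with the simple poles at $0,1,\dots,d-1$ of $\phi$; the remaining pole $s=d$ of $\phi$ lies to the right of the initial contour and is never crossed, which explains its absence from $\mathfrak{D}_{\mathcal{T}}$.

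The main obstacle will be verifying that the boundary integrals along the leftward screens vanish and that the resulting residue series converges absolutely. On $\mathrm{Re}(s)=c$ each term of $\phi(s)$ is bounded by a constant multiple of $h^{c-d}$ or $g^{c-d}$; combined with $|\varepsilon^{d-s}|=\varepsilon^{d-c}$ this produces decay factors $(\varepsilon/h)^{d-c}$ and $(\varepsilon/g)^{d-c}$, both of which tend to zero as $c\to -\infty$ precisely because $\varepsilon<h\le g$ — this is exactly where the hypothesis $\varepsilon<h$ is used and why the theorem is stated in this range. Moreover $\zeta(s)$ itself decays as $\mathrm{Re}(s)\to -\infty$ (since $|\sum_j r_j^s|$ blows up there), and needs only polynomial vertical control on suitable screens separating the complex dimensions, in the spirit of \cite{LaFra}. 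Taken together, these estimates force the screen integrals to vanish in the limit and ensure absolute convergence of the residue series, yielding the pointwise identity claimed by the theorem.
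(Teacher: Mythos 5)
Your proposal follows essentially the same route as the paper: expand $V_{\mathcal{T}}(\varepsilon)$ over the scaled copies of $G$, recognize the bracketed factor of $\zeta_{\mathcal{T}}$ as the Mellin transform of the generator's tube function (the paper transforms in the scaling variable $x$ and then substitutes $s\to -s$, which is equivalent to your transform in $u=\varepsilon/x$), invert, interchange sum and integral to produce $\zeta(s)$, and shift the contour leftward past $\mathfrak{D}\cup\{0,1,\dots,d-1\}$ while $s=d$ stays to the right of the initial line. The one place your sketch is thinner than the paper's argument is the final vanishing step: letting the vertical line drift to $-\infty$ is delicate because the line integral is only conditionally convergent (the integrand decays merely like $1/|t|$), so the pointwise bound $(\varepsilon/h)^{d-c}\to 0$ does not by itself suffice; the paper instead truncates at heights $\pm\tau_n$ where $\zeta$ is uniformly bounded and then annihilates the residual integral on a single fixed line $\mathrm{Re}(s)=c_{\ell}$ by closing with a left semicircle and invoking Jordan's lemma --- which is precisely where the hypothesis $\varepsilon<h$ enters, exactly as you identified.
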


\begin{remark}
Note that $d$ is not included in the set $\mathfrak{D}_{\mathcal{T}}$. The reason of this exclusion will be clear from the proof of the theorem.
\end{remark}

\setcounter{example}{0}
\begin{example}[continued]

The scaling $\zeta$-function of the system $\Phi$ of Example 1 is given by \[\zeta(s)=\frac{1}{1-\displaystyle \sum_{j=1}^{24}r_j^s}=\frac{1}{1-24(\frac{1}{6})^s}\quad \text{for } \rm{Re(s)>D},\] where $D=1+\log_6{4}$.

The set of complex dimension is $\mathfrak{D}=\{D+ \mathbbm{i}np \mid n \in \mathbb{Z} \}$ where $p=\frac{2\pi}{\log 6}$.

The geometric $\zeta$-function is given by
\begin{equation*}
\zeta_{\mathcal{T}}(s,\varepsilon)=\zeta(s)\varepsilon^{2-s}\left ( \displaystyle \frac{\left (\frac{1}{\sqrt{3}}\right )^s}{s}(\pi-6\sqrt{3})+ \frac{\left (\frac{1}{\sqrt{3}}\right )^{s-1}}{s-1} 12+\frac{1}{s-2}(-3\sqrt{3}) +  \displaystyle \int_{\frac {1}{\sqrt{3}}}^1 u^{s-3} \lambda_G(u)du \right ).
\end{equation*}

Thus by Theorem \ref{bizimteo} we get
\begin{eqnarray*}
V_{\mathcal{T}}
(\varepsilon)&=&\sum_{\omega \in \mathfrak{D} \cup \{0,1\}}{\rm{res}}(\zeta_{\mathcal{T}}(s,\varepsilon);\omega)\\
&=&\sum_{n \in \mathbb{Z}} {\rm{res}} (\zeta_{\mathcal{T}}(s,\varepsilon);D+\mathbbm{i}np)+ {\rm{res}} (\zeta_{\mathcal{T}}(s,\varepsilon);0)+{\rm{res}} (\zeta_{\mathcal{T}}(s,\varepsilon);1)\\
&=& \varepsilon^{2-D-\mathbbm{i}np} \frac{1}{\log 6} \left( \frac{  \left ( \frac{1}{\sqrt{3}}\right )^{D+\mathbbm{i}np}}{D+\mathbbm{i}np}(\pi-6\sqrt{3}) +  \frac{\left (\frac{1}{\sqrt{3}}\right )^{D-1+\mathbbm{i}np}}{D-1+\mathbbm{i}np} 12 +\frac{3\sqrt{3}}{D-2+\mathbbm{i}np} + \Lambda(D+\mathbbm{i}np)\right ) \\
&\,&+\frac{6\sqrt{3}-\pi}{23}\, \varepsilon^2-4\varepsilon.
\end{eqnarray*}
\end{example}

\section{Proof of Theorem \ref{bizimteo}}

Our goal is to find a closed expression for $V_{\mathcal{T}}
(\varepsilon)=\displaystyle \sum_{n=1}^{\infty}V_{T_n}(\varepsilon)$ as stated in Theorem \ref{bizimteo}. As we  assumed a single generator for simplicity, the volume of the inner $\varepsilon$-tube of the tiling $\mathcal{T}$ is, by the tileset condition, the sum of the volumes of the inner $\varepsilon$-neighborhoods of all the scaled copies of $G$ appearing in the tiling:
\[V_{\mathcal{T}}
(\varepsilon)=\displaystyle \sum_{k=1}^{\infty} \sum_{w \in W_k} V_{r_w G}(\varepsilon),\] where $W_k$ and $r_w$ are as in Definition \ref{scalingzeta} and $r_w G$ is a copy of $G$ scaled by $r_w$. A simple calculation shows that, if $V_G(\varepsilon)$ is given as in (\ref{vegeepsilon}), then
\[V_{xG}(\varepsilon)= \left \{
\begin{array}{cl}
  \displaystyle \sum_{j=0}^{d-1}\kappa_j(G)x^j\varepsilon^{d-j} &\text{ for } \varepsilon < xh \\
  x^d \lambda (\frac{\varepsilon}{x}) &\text{ for } xh \leq \varepsilon \leq xg \\
  -x^d \kappa_d(G) &\text{ for } \varepsilon > xg.
\end{array}
 \right .
\]
It will be more convenient for us to regard $V_{xG}(\varepsilon)$ as a two-variable function of $x$ and $\varepsilon$:
\[V_{G}(x,\varepsilon):= V_{xG}(\varepsilon)=\left \{
\begin{array}{cl}
  \displaystyle -x^d \kappa_d(G) &\text{ for } 0 <x < \frac{\varepsilon}{g} \\
  x^d \lambda (\frac{\varepsilon}{x}) &\text{ for } \frac{\varepsilon}{g} \leq x \leq \frac{\varepsilon}{h} \\
  \displaystyle \sum_{j=0}^{d-1}\kappa_j(G)x^j\varepsilon^{d-j} &\text{ for } x > \frac{\varepsilon}{h}.
\end{array}
 \right .
\]

Recall that the Mellin transform $\mathcal{M}[f;s]$ of a function $f:(0,\infty)\rightarrow \mathbb{R}$ is given by \[\mathcal{M}[f;s]=\widetilde{f}(s)=\int_0^{\infty}x^{s-1}f(x)dx.\]

For fixed $\varepsilon$, we take the Mellin transform of $V_G(x,\varepsilon)$ as a function of $x$:
\begin{eqnarray*}
\widetilde{V}_G(s,\varepsilon)&=&\int_0^{\frac{\varepsilon}{g}} x^{s-1} (-x^d\kappa_d(G))dx + \int_{\frac{\varepsilon}{g}}^{\frac{\varepsilon}{h}} x^{s-1} x^d\lambda (\frac{\varepsilon}{x} ) dx + \int_{\frac{\varepsilon}{h}}^{\infty} x^{s-1} \left( \sum_{j=0}^{d-1} \kappa_j(G) x^j \varepsilon^{d-j} \right ) dx \\
&=& -\varepsilon^{s+d}\left( \frac{\kappa_d(G)}{g^{s+d}(s+d)}+ \sum_{j=0}^{d-1} \frac{\kappa_j(G)}{h^{s+j}(s+j)} \right )+\int_h^g \frac{\varepsilon^{s+d}}{u^{s+d+1}}\lambda(u)du,
\end{eqnarray*}
for $-d< {\rm Re}(s) < 1-d $. (In the second integral we made the change of variable $\frac{\varepsilon}{x} \to u $.)

\begin{eqnarray}
\widetilde{V}_G(s,\varepsilon)&=& -\varepsilon^{s+d}\left( \frac{\kappa_d(G)}{g^{s+d}(s+d)}+ \sum_{j=0}^{d-1} \frac{\kappa_j(G)}{h^{s+j}(s+j)}-\int_h^g u^{-s-d-1}\lambda(u)du \right ) \nonumber\\
\widetilde{V}_G(s,\varepsilon)&=& -\varepsilon^{s+d}\left( \frac{\kappa_d(G)}{g^{s+d}(s+d)}+ \sum_{j=0}^{d-1} \frac{\kappa_j(G)}{h^{s+j}(s+j)}-\Lambda(-s) \right ), \label{vgtildeepsilon}
\end{eqnarray}
where $\Lambda(s)=\displaystyle \int_h^g{u^{s-d-1}}\lambda(u)du$.

If we take the inverse Mellin transform of $\widetilde{V}_G(s,\varepsilon)$, we obtain \[V_G(x,\varepsilon)=\mathcal{M}^{-1}\left[\widetilde{V}_G(s,\varepsilon);x  \right] =\frac{1}{2\pi \mathbbm{i}}  \displaystyle
\int_{-c-\mathbbm{i}\infty}^{-c+\mathbbm{i}\infty}
x^{-s}\widetilde{V}_G(s,\varepsilon)ds, \] where $d-1<c<d$. For an additional purpose below, we choose $c$ such that $d-1<D<c<d$.

We shall now insert the above expression for $V_G(x,\varepsilon)$ into the sum
\[V_{\mathcal{T}}(\varepsilon)=\sum_{k=0}^\infty \sum_{w \in W_k} V_{r_w G}(\varepsilon),\]
but for the ease of the computation, we order the scaling coefficients $r_w$ into a sequence $\{x_m\}_{m=1}^{\infty}$: \[V_{\mathcal{T}}(\varepsilon):=\sum_{m=1}^{\infty}
V_{x_m G}(\varepsilon),
\] so that  we get
\begin{equation} \label{nebir}
V_{\mathcal{T}}(\varepsilon)=\sum_{m=1}^{\infty}\frac{1}{2\pi \mathbbm{i}} \int_{-c-\mathbbm{i}\infty}^{-c+\mathbbm{i}\infty} x_m^{-s} \widetilde{V}_G(s,\varepsilon)ds.
\end{equation}

We can now change the order of the sum and the integral. (For justification see the appendix.)
\begin{eqnarray*}
V_{\mathcal{T}}(\varepsilon)&=&\frac{1}{2\pi \mathbbm{i}} \int_{-c-\mathbbm{i}\infty}^{-c+\mathbbm{i}\infty}\sum_{m=1}^{\infty}x_m^{-s} \widetilde{V}_G(s,\varepsilon)ds\\
&=&\frac{1}{2\pi \mathbbm{i}} \int_{-c-\mathbbm{i}\infty}^{-c+\mathbbm{i}\infty}\zeta(-s) \widetilde{V}_G(s,\varepsilon)ds\quad \text{ by Def.\ref{scalingzeta}}.
\end{eqnarray*}
Changing the variable of the integral by $s'=-s$ we find \[V_{\mathcal{T}}(\varepsilon)=\frac{1}{2\pi \mathbbm{i}} \int_{c-\mathbbm{i}\infty}^{c+\mathbbm{i}\infty}\zeta(s) \widetilde{V}_G(-s,\varepsilon)ds.\]
By (\ref{vgtildeepsilon}) we get
\begin{eqnarray*}
V_{\mathcal{T}}(\varepsilon)&=&\frac{1}{2\pi \mathbbm{i}} \int_{c-\mathbbm{i}\infty}^{c+\mathbbm{i}\infty}\zeta(s) \varepsilon^{d-s}\left( \frac{\kappa_d(G)}{g^{d-s}(s-d)}+ \sum_{j=0}^{d-1} \frac{\kappa_j(G)}{h^{j-s}(s-j)}+\Lambda(s) \right )ds\\
&=&\frac{1}{2\pi \mathbbm{i}} \int_{c-\mathbbm{i}\infty}^{c+\mathbbm{i}\infty}\zeta_{\mathcal{T}}(s,\varepsilon) ds.
\end{eqnarray*}

The poles of $\zeta(s)$ are contained in a horizontally bounded strip $D_{\ell} \leq {\rm Re}(s) \leq D$, for some real number $D_{\ell}$, which can be assumed to be negative also (see \cite[Theorem. 2.17]{LaFra}).
Let $c_{\ell}$ be chosen such that
\begin{eqnarray}
& &\text{i)}\quad c_{\ell}<D_{\ell} \\
& &\text{ii)}\quad \zeta(s) \text{ is bounded for } {\rm Re}(s)\leq c_{\ell}. \label{uci}
\end{eqnarray}
(The second property is possible because $|\zeta(s)| \to 0$ as ${\rm Re}(s)\to -\infty$.)

\begin{figure}[h]
\begin{center}
\begin{pspicture}(-7,-2)(4,2)
\pspolygon[linewidth=0.01pt,fillstyle=solid,fillcolor=lightgray](1.5,-2)(1.5,2)(-5,2)(-5,-2)
\psline [linewidth=0.5pt](-7,0)(-3,0) \psline
[linewidth=1.1pt,linestyle=dotted,dotsep=2pt](-2.5,0)(-3,0)
\psline[linewidth=0.5pt,arrowsize=4pt]{->}(-2.5,0)(4,0)
\psline[linewidth=0.5pt](2.3,-2)(2.3,2)\psline[linewidth=0.5pt](1.5,-2)(1.5,2)
\psline[linewidth=0.5pt,arrowsize=4pt]{->}(2.3,-2)(2.3,1)
\psline[linewidth=0.5pt](-6,-2)(-6,2)
\psline[linewidth=0.5pt,arrowsize=4pt]{->}(-6,-2)(-6,1)
\psline[linewidth=0.5pt](-5,-2)(-5,2)
\uput[d](3.4,0){$d$}\uput[d](0.8,0){ \small$
d-1$}\uput[d](-1.8,0){\small$d-2$} \uput[ur](1.45,0){$D$}
\uput[ul](-6,0){$c_{\ell}$}\uput[ur](2.3,0){$c$}\uput[d](-4,0){$0$}
\uput[ul](-5,0){$D_{\ell}$} \uput[r](2.3,-1.8){$c-\mathbbm{i}
\infty$}\uput[r](2.3,1.8){$c+\mathbbm{i}
\infty$}\uput[l](-6,-1.8){$c_{\ell}-\mathbbm{i}
\infty$}\uput[l](-6,1.8){$c_{\ell}+\mathbbm{i} \infty$}
\qdisk(-6,0){1.5pt}\qdisk(-5,0){1.5pt}\qdisk(-4,0){1.5pt}\qdisk(1.5,0){1.5pt}
\qdisk(-1.8,0){1.5pt}\qdisk(0.8,0){1.5pt}\qdisk(2.3,0){1.5pt}\qdisk(3.4,0){1.5pt}
\end{pspicture}
\end{center}
\caption{}\label{Fig:strip}
\end{figure}

We now proceed to show that (see Fig.\ref{Fig:strip})
\begin{equation}\label{ilksart}
\frac{1}{2\pi \mathbbm{i}} \int_{c-\mathbbm{i}\infty}^{c+\mathbbm{i}\infty}\zeta_{\mathcal{T}}(s,\varepsilon) ds=\frac{1}{2\pi \mathbbm{i}} \int_{c_{\ell}-\mathbbm{i}\infty}^{c_{\ell}+\mathbbm{i}\infty}\zeta_{\mathcal{T}}(s,\varepsilon) ds+ \sum_{\omega \in \mathfrak{D}\cup \{0,1,\dots,d-1\} }{\rm res}(\zeta_{\tau}(s,\varepsilon);\omega)
\end{equation}
and
\begin{equation}\label{ikincisart}
\frac{1}{2\pi \mathbbm{i}} \int_{c_{\ell}-\mathbbm{i}\infty}^{c_{\ell}+\mathbbm{i}\infty}\zeta_{\mathcal{T}}(s,\varepsilon) ds = 0 \quad \text{ for } \varepsilon < h.
\end{equation}

Clearly, this will complete the proof of the theorem.

We begin with (\ref{ilksart}): First note that there exists a positive increasing sequence $\{\tau_n\}_{n=1}^{\infty}$ with
$\{\tau_n\} \to \infty $ such that
\begin{equation}\label{n6}
|\zeta(\sigma\pm \mathbbm{i} \tau_n)| \leq M \text{ for  } c_{\ell}\leq \sigma \leq d \text { and for all }  n,
\end{equation}
where $M$ is some positive constant. (see \cite[Theorem 3.26]{LaFra}.)

Denote the rectangle with the corners
$c-\mathbbm{i}\tau_n,c+\mathbbm{i}\tau_n,c_{\ell}+\mathbbm{i}\tau_n,c_{\ell}-\mathbbm{i}\tau_n$
(and with the edges $L_{1,n},L_{2,n},L_{3,n},L_{4,n}$) by $S_n$
(see Fig. \ref{Fig:rectangle}). By residue theorem \[ \frac{1}{2 \pi
\mathbbm{i}}\int _{\partial S_n}
\zeta_{\mathcal{T}}(s,\varepsilon)ds= \sum_{\omega \in \left (
\mathfrak{D} \cup \{0,1,\dots,d-1\}  \right
) \cap S_n} {\rm {res}} (\zeta_{\mathcal{T}}(s,\varepsilon);\omega).
\]
(Note that $d$ lies outside the rectangles $S_n$ and does not contribute to the above sum.)

\begin{figure}[h]
\begin{center}
\begin{pspicture}(-3,-2)(4,2.5)
\psline [linewidth=0.5pt,arrowsize=4pt]{->}(-2,0)(3.5,0)
\psline[linewidth=0.5pt](2,-2)(2,2)
\psline[linewidth=0.5pt,arrowsize=4pt]{->}(2,-2)(2,1)
\psline[linewidth=0.5pt](-1,-2)(-1,2)
\psline[linewidth=0.5pt,arrowsize=4pt]{<-}(-1,-1)(-1,1)
\psline[linewidth=0.5pt](-1,2)(2,2)
\psline[linewidth=0.5pt](-1,-2)(2,-2)
\psline[linewidth=0.5pt,arrowsize=4pt]{<-}(0.5,2)(2,2)
\psline[linewidth=0.5pt,arrowsize=4pt]{->}(-1,-2)(0.5,-2)
\uput[r](2,1){$L_{1,n}$} \uput[u](0.5,2){$L_{2,n}$}
\uput[l](-1,-1){$L_{3,n}$}\uput[d](0.5,-2){$L_{4,n}$}
\uput[ul](-1,0){$c_{\ell}$}\uput[ur](2,0){$c$}
\uput[r](2,-2){$c-\mathbbm{i} \tau_n$}\uput[r](2,2){$c+\mathbbm{i}
\tau_n$} \uput[l](-1,-2){$c_{\ell}-\mathbbm{i}
\tau_n$}\uput[l](-1,2){$c_{\ell}+\mathbbm{i}
\tau_n$}\rput(0.5,0.8){$S_n$}\qdisk(-1,0){1.5pt}\qdisk(2,0){1.5pt}\qdisk(2.9,0){1.5pt}
\qdisk(2,-2){1pt}\qdisk(2,2){1pt}\qdisk(-1,-2){1pt}\qdisk(-1,2){1pt}\uput[u](2.9,0){$d$}
\end{pspicture}
\end{center}
\caption{The rectangle $S_n$.} \label{Fig:rectangle}
\end{figure}
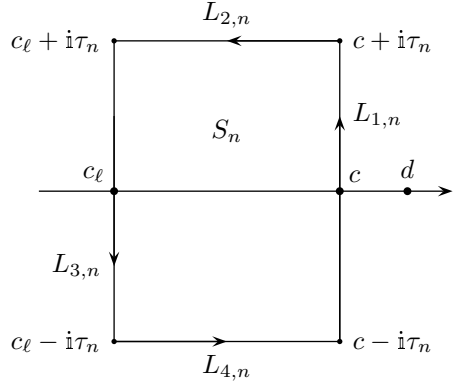

Therefore
 \begin{eqnarray}
\frac{1}{2 \pi \mathbbm{i}} \left ( \int_{L_{1,n}}
\zeta_{\mathcal{T}}(s,\varepsilon)ds +\int_{L_{2,n}}
\zeta_{\mathcal{T}}(s,\varepsilon)ds +\int_{L_{3,n}}
\zeta_{\mathcal{T}}(s,\varepsilon)ds +\int_{L_{4,n}}
\zeta_{\mathcal{T}}(s,\varepsilon)ds \right ) \label{inttop}\\
=\sum_{\omega \in \mathfrak{D} \cup
\{0,1,\dots,d-1\} \cap S_n} {\rm {res}}
(\zeta_{\mathcal{T}} (s,\varepsilon);\omega). \nonumber
\end{eqnarray}

Let us consider first the integral over $L_{2,n}$. For $s \in L_{2,n}$ we have $s=\sigma+ \mathbbm{i} \tau_n$, $c_{\ell}\leq \sigma \leq c$.
\begin{eqnarray*}
|\zeta_{\mathcal{T}} (s,\varepsilon)|&=& \left \vert \zeta(s) \varepsilon^{d-s} \left(  \sum_{j=0}^{d-1} \frac{h^{s-j}}{(s-j)}\kappa_j(G)+\frac{g^{s-d}}{(s-d)}\kappa_d(G)+\Lambda(s) \right ) \right \vert \\
&\leq& M \varepsilon^{d-\sigma} \left(  \sum_{j=0}^{d-1} \frac{h^{\sigma-j}}{|\sigma+\mathbbm{i}\tau_n-j|} |\kappa_j(G)|+\frac{g^{\sigma-d}}{|\sigma+\mathbbm{i}\tau_n-d|}|\kappa_d(G)|+|\Lambda(s)| \right )\quad \text{by (\ref{n6})}\\
&\leq& M \varepsilon^{d-\sigma} \left(  \sum_{j=0}^{d-1} \frac{h^{\sigma-j}|\kappa_j(G)|}{\tau_n}+\frac{g^{\sigma-d}|\kappa_d(G)|}{\tau_n} \right ) + M \varepsilon^{d-\sigma} |\Lambda(s)| \\
&\leq&  \frac{MM'}{\tau_n} + M \varepsilon^{d-\sigma} |\Lambda(s)|,
\end{eqnarray*}
where $M'=\underset{c_{\ell}\leq \sigma \leq c}{\max} \left\{ \varepsilon^{d-\sigma} \left( \sum_{j=0}^{d-1} h^{\sigma-j}|\kappa_j(G)|+g^{\sigma-d} |\kappa_d(G)| \right ) \right \}$.

Now we consider the term $M \varepsilon^{d-\sigma} |\Lambda(s)|$: Recall that
\[ \Lambda (s) = \int_{h}^{g} u^{s-d-1} \lambda (u)du.\] Integrating by parts,
\begin{equation} \label{nussu}
\Lambda (s) = \left . \frac{u^{s-d}}{s-d} \lambda (u) \right \vert_{h}^{g}- \int_{h}^{g} \frac{u^{s-d}}{s-d} \lambda' (u)du,
\end{equation}
whence we can write $\varepsilon^{d-\sigma} |\Lambda(s)| \leq \frac{M''}{|s-d|} \leq \frac{M''}{\tau_n}$ by the assumption of continuity and piecewise continuous differentiability of $\lambda(u)$ and $c_{\ell}\leq \sigma \leq c$. Thus we obtain \[ \left \vert \zeta_{\mathcal{T}} (s,\varepsilon) \right \vert \leq \frac{MM'}{\tau_n}+\frac{MM''}{\tau_n}.\] Therefore, \[\left \vert \int_{L_{2,n}}\zeta_{\mathcal{T}} (s,\varepsilon) \right \vert \leq \frac{M(M'+M'')}{\tau_n}(c-c_{\ell}) \to 0 \text{ for } n \to \infty,\] since $\tau_n \to \infty$ as $n \to \infty$.

Similarly the integral over $L_{4,n}$ tends to $0$ as $n \to \infty $. Thus letting $n\to \infty$ in (\ref{inttop}) gives (\ref{ilksart}).

Now, we will show (\ref{ikincisart}): Let $L_n$ be the line
segment $L_n(t)=c_{\ell}+\mathbbm{i}t, \, -n \leq t \leq n;$ $C_n$
be the semicircle $C_n(t)=c_{\ell}+n e^{\mathbbm{i}t}, \,
\frac{\pi}{2} \leq t \leq \frac{3\pi}{2}$ and $\Gamma_n=L_n+C_n$
(see Fig.\ref{Fig:semi}). By the choice of $c_{\ell}$,
$\zeta_{\mathcal{T}}(s,\varepsilon)$ is analytic on and inside
$\Gamma_n$ and therefore \[ \int_{L_n}
\zeta_{\mathcal{T}}(s,\varepsilon)ds=-\int_{C_n}
\zeta_{\mathcal{T}}(s,\varepsilon)ds.\]

\begin{figure}[h]
\begin{center}
\begin{pspicture}(-2,-2)(4,2)
\psline [linewidth=0.5pt,arrowsize=4pt]{->}(-1,0)(4,0)
\psline[linewidth=0.5pt](2,-2)(2,2)
\psline[linewidth=0.5pt,arrowsize=4pt]{->}(2,-2)(2,1)
\psarc[linewidth=0.5pt](2,0){2}{90}{270}
\psarc[linewidth=0.5pt,arrowsize=4pt]{->}(2,0){2}{90}{150}
 \uput[r](2,1){$L_{n}$}
\uput[ur](2,0){$c_{\ell}$}
\uput[r](2,-2){$c_{\ell}-\mathbbm{i} n$}\uput[r](2,2){$c_{\ell}+\mathbbm{i} n$}\rput(0,1.5){$C_n$}\qdisk(2,2){1pt}\qdisk(2,-2){1pt}
\end{pspicture}
\end{center}
\caption{The contour $\Gamma_n=L_n+C_n$.}\label{Fig:semi}
\end{figure}
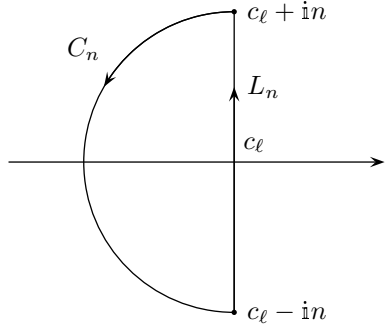

We will be done when we show that the right-hand side tends to $0$
as $n \to \infty$. Using the parametrization of $C_n$, we obtain
\begin{eqnarray*}
\left \vert \int_{C_n} \zeta_{\mathcal{T}}(s,\varepsilon)ds \right
\vert &=& \left \vert \int_{\frac{\pi}{2}}^{ \frac{3\pi}{2}}
\zeta(c_{\ell}+n e^{\mathbbm{i}t }) \varepsilon^{d-c_{\ell}-n
e^{\mathbbm{i}t }} \left(   \sum_{j=0}^{d-1} \frac{ h^{c_{\ell}+n
e^{\mathbbm{i}t }-j} }{ c_{\ell}+n e^{\mathbbm{i}t }-j
}\kappa_j(G) \right. \right.\\
& & + \left. \left.    \frac{ g^{c_{\ell}+n e^{\mathbbm{i}t }-d}
}{ c_{\ell}+n e^{\mathbbm{i}t }-d }\kappa_d(G) +\Lambda(c_{\ell}+n
e^{\mathbbm{i}t })     \right) \mathbbm{i} n e^{\mathbbm{i}t} dt
\right \vert.
\end{eqnarray*}

By condition (\ref{uci}), $\zeta(s)$ is bounded for ${\rm Re}(s) \leq
c_{\ell}$, say $\left \vert \zeta(s)\right \vert \leq K$:
\begin{eqnarray*}
\left \vert \int_{C_n} \zeta_{\mathcal{T}}(s,\varepsilon)ds \right
\vert & \leq & K \varepsilon^{d-c_{\ell}}  \left( \sum_{j=0}^{d-1}
\frac{n\, h^{c_{\ell}-j}  }{ n- \vert c_{\ell}-j \vert } \vert
\kappa_j(G) \vert    \int_{\frac{\pi}{2}}^{ \frac{3\pi}{2}} \left(
\frac{h}{\varepsilon}\right)^{n \cos t } dt \right.\\
& &+ \frac{n\, g^{c_{\ell}-d} }{ n- \vert c_{\ell}-d \vert } \vert
\kappa_d(G) \vert  \int_{\frac{\pi}{2}}^{ \frac{3\pi}{2}} \left(
\frac{g}{\varepsilon}\right)^{n \cos t } dt\\
& & \left.+ n  \int_{\frac{\pi}{2}}^{ \frac{3\pi}{2}} \left \vert
\Lambda( c_{\ell}+n e^{\mathbbm{i}t } )\right \vert
\varepsilon^{-n \cos t} dt \right).
\end{eqnarray*}
Let us denote the right-hand side by $I_1+I_2+I_3$. We have \[I_1=K \varepsilon^{d-c_{\ell}}  \sum_{j=0}^{d-1}
\frac{n\, h^{c_{\ell}-j}  }{ n- \vert c_{\ell}-j \vert } \vert
\kappa_j(G) \vert    \int_{\frac{\pi}{2}}^{ \frac{3\pi}{2}} \left(
\frac{h}{\varepsilon}\right)^{n \cos t } dt \leq K_1 \int_{\frac{\pi}{2}}^{ \frac{3\pi}{2}} \left(
\frac{h}{\varepsilon}\right)^{n \cos t } dt,\] where $K_1$ is some constant.

The well-known Jordan Lemma \cite{churchill} states that \[\lim_{n\to\infty} \int_{\frac{\pi}{2}}^{ \frac{3\pi}{2}} a^{n \cos t } dt=0,\] for any fixed $a>1$.
Since  we assumed $\varepsilon < h$ for Theorem \ref{bizimteo}, we thus get $I_1 \to 0$ as $n \to \infty$.

Similarly $I_2 \to 0$ as $n\to \infty$, since $\varepsilon <g$ as $h<g$.

To deal with the term $I_3$, recall that by (\ref{nussu}) \[\Lambda(s)=\left .\frac{u^{s-d}}{s-d} \lambda (u) \right \vert_h^g - \int_{h}^{g} \frac{u^{s-d}}{s-d} \lambda'(u)du. \]
By continuity and piecewise continuous differentiability of $\lambda$, we can write \[
|\Lambda (s)| \leq \frac {1}{|s-d|} \left ( K_1\, g^{{\rm Re}(s)-d}+ K_2\, h^{{\rm Re}(s)-d} \right ) +\frac {1}{|s-d|}\int_h^g u^{{\rm Re}(s)-d}\, K_3\, du
\]
with some constants $K_1,K_2,K_3$.

For $s=c_{\ell}+ne^{\mathbbm{i}t}, (\frac{\pi}{2} \leq t \leq \frac{3\pi}{2})$ we have ${\rm Re}(s)=c_{\ell}+n \cos t <0$ and $u^{{\rm Re}(s)-d}$ is a decreasing function on $[h,g]$.
Thus,
\begin{eqnarray}
|\Lambda (s)| &\leq& K_4 \, \frac {1}{|s-d|}  h^{{\rm Re}(s)-d} +K_3 \, \frac {1}{|s-d|} (g-h)   h^{{\rm Re}(s)-d} \nonumber \\
&\leq &  K_5 \,  \frac{1}{|c_{\ell}+n e^{\mathbbm{i}t}-d|} h^{c_{\ell}+n \cos t-d} \nonumber \\
&\leq & K_6 \, \frac{h^{n \cos t}}{n-|c_{\ell}-d|}. \label{nikiussu}
\end{eqnarray}

Now we show that $I_3 \to 0 $ as $n \to \infty$. Since \[I_3= n \int_{\frac{\pi}{2}}^{\frac{3\pi}{2}} \left \vert \Lambda \left ( c_{\ell}+ne^{\mathbbm{i}t} \right)  \right \vert \varepsilon ^{-n \cos t} dt, \]
using (\ref{nikiussu}) \[I_3 \leq \frac{K_6 \, n}{n-|c_{\ell}-d|} \int_{\frac{\pi}{2}}^{\frac{3\pi}{2}} \left (\frac{h}{\varepsilon} \right )^{n \cos t} dt \to 0 \quad \text{as } n \to \infty, \] by Jordan Lemma. Hence the claim (\ref{ikincisart}) is verified and thus the proof of Theorem \ref{bizimteo} is completed.
\newpage
\begin{appendices}
Our aim here is to justify the change of the orders of the sum and the integral in (\ref{nebir}). That is, we will show that the following equality holds:
\begin{equation}\label{ap_eq_1}
\underset{m=1}{\overset{\infty } {\sum }}\dfrac{1}{2\pi\mathbbm{i}} \int_{c-\mathbbm{i}\infty}^{c+\mathbbm{i}\infty}    x_m^s \widetilde{V}_{G}(-s,\varepsilon) ds
=
\dfrac{1}{2\pi\mathbbm{i}} \int_{c-\mathbbm{i}\infty}^{c+\mathbbm{i}\infty} \underset{m=1}{\overset{\infty } {\sum }} x_m^s \widetilde{V}_{G}(-s,\varepsilon) ds.
\end{equation}

Recall that
\begin{equation}\label{ap_eq_2}
\underset{m=1}{\overset{\infty }{\sum }}x_m^{\alpha}<\infty \text{ \ \ for } \alpha>D
\end{equation}
and we assume $c>D$.

We first prove the following lemma:

\begin{lemma}
For any $R>0$,
\begin{equation}\label{ap_eq_3}
\underset{m=1}{\overset{\infty } {\sum }}  \int_{c-\mathbbm{i}R}^{c+\mathbbm{i}R} x_m^s \widetilde{V}_{G}(-s,\varepsilon) ds
=\int_{c-\mathbbm{i}R}^{c+\mathbbm{i}R} \underset{m=1}{\overset{\infty } {\sum }} x_m^s \widetilde{V}_{G}(-s,\varepsilon) ds.
\end{equation}
\end{lemma}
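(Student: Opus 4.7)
The strategy is to apply Fubini's theorem; since the contour $[c-\mathbbm{i}R, c+\mathbbm{i}R]$ is compact and of finite length $2R$, one just needs a uniform bound on each summand to establish absolute integrability.

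First I would parametrize $s = c + \mathbbm{i}t$ with $t \in [-R, R]$. Since each $x_m > 0$ is a real scaling ratio, $|x_m^s| = x_m^c$, and by (\ref{ap_eq_2}) the series $\sum_{m=1}^{\infty} x_m^c$ converges (in fact it equals $\zeta(c)$ by Definition \ref{scalingzeta}, since the $x_m$ are a reordering of the $r_w$). Inspecting the closed form (\ref{vgtildeepsilon}), the function $\widetilde{V}_G(-s,\varepsilon)$ is meromorphic in $s$ with possible poles only at $s \in \{0, 1, \ldots, d\}$. The choice $d-1 < D < c < d$ made earlier in the proof of Theorem \ref{bizimteo} keeps the vertical line $\mathrm{Re}(s) = c$ clear of every such pole, so $\widetilde{V}_G(-s,\varepsilon)$ is continuous on the compact segment and attains a finite maximum $M_R$ there.

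Combining these observations gives
\[ \sum_{m=1}^{\infty} \int_{c-\mathbbm{i}R}^{c+\mathbbm{i}R} \bigl| x_m^s \, \widetilde{V}_G(-s,\varepsilon) \bigr| \, |ds| \;\le\; 2R \, M_R \sum_{m=1}^{\infty} x_m^c \;<\; \infty, \]
so Fubini's theorem yields (\ref{ap_eq_3}) at once. The argument is essentially routine; the only point worth flagging is that the finiteness of the contour length is doing all the work, so the same strategy will \emph{not} extend to $R = \infty$. Passing from (\ref{ap_eq_3}) to (\ref{ap_eq_1}) is the genuinely delicate step and will require quantitative decay estimates for $\zeta(s)$ and $\widetilde{V}_G(-s,\varepsilon)$ along vertical lines as $|\mathrm{Im}(s)| \to \infty$ (the former being available via the bound cited at (\ref{n6}), the latter obtainable from the integration-by-parts identity (\ref{nussu})).
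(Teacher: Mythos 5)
Your proposal is correct and follows essentially the same route as the paper: parametrize the segment, use $|x_m^s|=x_m^c$ together with the convergence of $\sum_m x_m^c$ for $c>D$, bound $\widetilde{V}_G(-s,\varepsilon)$ on the compact segment by its continuity there, and conclude by Fubini. Your closing remark that the passage to $R=\infty$ is the delicate step also matches the paper, which handles it separately via dominated convergence and contour estimates.
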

\begin{proof}
We parameterize the line segment $L_R$ beginning at $c-\mathbbm{i}R$ and ending at $c+\mathbbm{i}R$ by $L_R:L_R(t)=c+\mathbbm{i}t,-R \leq t \leq R$. Then (\ref{ap_eq_3}) is equivalent to
\begin{equation*}
\int_{-R}^R  \underset{m=1}{\overset{\infty } {\sum }} x_m^{c+\mathbbm{i}t} \widetilde{V}_{G}(-c-\mathbbm{i}t,\varepsilon)\mathbbm{i} dt
=\underset{m=1}{\overset{\infty } {\sum }}  \int_{-R}^R x_m^{c+\mathbbm{i}t}\widetilde{V}_{G}(-c-\mathbbm{i}t,\varepsilon) \mathbbm{i} dt.
\end{equation*}
By Fubini's theorem, it suffices to show that
\[
\underset{m=1}{\overset{\infty } {\sum }}  \int_{-R}^R \left\vert x_m^{c+\mathbbm{i}t}\widetilde{V}_{G}(-c-\mathbbm{i}t,\varepsilon) \right\vert  dt<\infty.
\]
Since, $\widetilde{V}_{G}(-s,\varepsilon)$ is analytic in the strip $d-1< \rm{Re(s)}<d$, it is bounded on the line segment $L_R$. So,
\begin{eqnarray*}
\underset{m=1}{\overset{\infty } {\sum }}  \int_{-R}^R \left\vert x_m^{c+\mathbbm{i}t}\widetilde{V}_{G}(-c-\mathbbm{i}t,\varepsilon) \right\vert  dt&\leq& M \underset{m=1}{\overset{\infty } {\sum }}  \int_{-R}^R x_m^c dt\\
&=&2RM \underset{m=1}{\overset{\infty } {\sum }}  x_m^c<\infty,
\end{eqnarray*}
by (\ref{ap_eq_2}).
\end{proof}

Now, right-hand side of (\ref{ap_eq_1}) is
\begin{equation*}
\underset{R\rightarrow \infty}{\lim } \dfrac{1}{2\pi\mathbbm{i}} \int_{c-\mathbbm{i}R}^{c+\mathbbm{i}R} \underset{m=1}{\overset{\infty } {\sum }} x_m^s \widetilde{V}_{G}(-s,\varepsilon) ds
\end{equation*}
and by the above lemma this equals
\begin{equation}\label{ap_eq_4}
\underset{R\rightarrow \infty}{\lim } \dfrac{1}{2\pi\mathbbm{i}} \underset{m=1}{\overset{\infty } {\sum }} \int_{c-\mathbbm{i}R}^{c+\mathbbm{i}R} x_m^s \widetilde{V}_{G}(-s,\varepsilon) ds.
\end{equation}
Let us write
\[
a_{m,R}:=\dfrac{1}{2\pi\mathbbm{i}} \int_{c-\mathbbm{i}R}^{c+\mathbbm{i}R} x_m^s \widetilde{V}_{G}(-s,\varepsilon) ds.
\]
Then (\ref{ap_eq_4}) equals
\begin{equation}\label{ap_eq_5}
\underset{R\rightarrow \infty}{\lim }  \underset{m=1}{\overset{\infty } {\sum }} a_{m,R}.
\end{equation}
Suppose for a moment that $ \exists \{a_m\}_{m=m_0}^{\infty}$ (not depending on $R$) such that for $R>R_0$
\begin{equation}\label{ap_eq_6}
\left\vert a_{m,R}\right\vert\leq a_m \text{\ \ for all } m\geq m_0
\end{equation}
and
\begin{equation}\label{ap_eq_7}
\underset{m=m_0}{\overset{\infty } {\sum }} a_m<\infty.
\end{equation}
Then by Lebesgue dominated convergence theorem, we can interchange the order of the limit and sum in (\ref{ap_eq_5}) and obtain
\begin{eqnarray*}
\underset{R\rightarrow \infty}{\lim }  \underset{m=1}{\overset{\infty } {\sum }} a_{m,R}&=& \underset{m=1}{\overset{\infty } {\sum }} \underset{R\rightarrow \infty}{\lim }   a_{m,R}\\
&=& \underset{m=1}{\overset{\infty } {\sum }}\dfrac{1}{2\pi\mathbbm{i}} \int_{c-\mathbbm{i}\infty}^{c+\mathbbm{i}\infty}    x_m^s \widetilde{V}_{G}(-s,\varepsilon) ds,
\end{eqnarray*}
which verifies (\ref{ap_eq_1}). Therefore, it suffices for us to show that $ \exists \{a_m\}_{m=m_0}^{\infty}$  such that (\ref{ap_eq_6}) and (\ref{ap_eq_7}) holds.

First note that, since $x_m\rightarrow 0$ as $m \rightarrow \infty$, there exists $m_0$ such that $x_m<\frac{\varepsilon}{2g}$ for $m \geq m_0$. Now, by (\ref{nebir})

\begin{eqnarray*}
\left\vert a_{m,R}\right\vert& \leq & \frac{1}{2\pi} \left\vert  \int_{c-\mathbbm{i}R}^{c+\mathbbm{i}R} x_m^s \varepsilon^{d-s} \left(   \underset{j=1}{\overset{d-1 } {\sum }} \frac{h^{s-j}}{s-j} \kappa_j(G)  \right)ds  \right\vert  \\
& +& \frac{1}{2\pi} \left\vert  \int_{c-\mathbbm{i}R}^{c+\mathbbm{i}R} x_m^s \varepsilon^{d-s} \frac{g^{s-d}}{s-d} \kappa_d(G)  ds  \right\vert\\
& +& \frac{1}{2\pi} \left\vert  \int_{c-\mathbbm{i}R}^{c+\mathbbm{i}R} x_m^s \varepsilon^{d-s} \Lambda(s)  ds  \right\vert \\
&=:& I_1(R,m)+I_2(R,m)+I_3(R,m).
\end{eqnarray*}

To estimate $I_1(R,m)$ and $I_2(R,m)$, note the following result (see, \cite[Sec. 3.3, p. 54-55]{edwards}): For $a>0$
\[
\left\vert  \dfrac{1}{2\pi\mathbbm{i}} \int_{a-\mathbbm{i}R}^{a+\mathbbm{i}R} \frac{x^s}{s} ds\right\vert \leq
\left\{
\begin{tabular}{lll}
$\frac{x^a}{\pi R \left\vert \log x \right \vert} $& $\text{ for }$ & $0<x<1 $\\
$K$ & $\text{ for }$ & $x=1$ \\
$\frac{x^a}{\pi R \log x }+res\left( \frac{x^s}{s};0 \right)$ & $\text{ for }$ & $x>1$.%
\end{tabular}
\right.
\]

Henceforth, we will denote by $K$ a finite positive constant whose value might be different at each occurrence. A similar approach shows that for $a>j$ and $R>1$

\[
\left\vert  \dfrac{1}{2\pi\mathbbm{i}} \int_{a-\mathbbm{i}R}^{a+\mathbbm{i}R} \frac{x^s}{s-j} ds\right\vert \leq
\left\{
\begin{tabular}{lll}
$\frac{x^a}{\pi \left\vert \log x \right \vert} $& $\text{ for }$ & $0<x<1 $\\
$K$ & $\text{ for }$ & $x=1$ \\
$\frac{x^a}{\pi \log x }+x^j$ & $\text{ for }$ & $x>1$%
\end{tabular}
\right.
\]
and for $a<j$ and $R>1$
\[
\left\vert  \dfrac{1}{2\pi\mathbbm{i}} \int_{a-\mathbbm{i}R}^{a+\mathbbm{i}R} \frac{x^s}{s-j} ds\right\vert \leq
\left\{
\begin{tabular}{lll}
$\frac{x^a}{\pi \left\vert \log x \right \vert} +x^j$& $\text{ for }$ & $0<x<1 $\\
$K$ & $\text{ for }$ & $x=1$ \\
$\frac{x^a}{\pi \log x }$ & $\text{ for }$ & $x>1$.%
\end{tabular}
\right.
\]
Using the above results, we obtain
\[
 I_1(R,m) \leq I_1(m):=K \left\{
\begin{tabular}{lll}
$ \frac{ \left(\frac{x_m h}{\varepsilon}\right)^c  }{\left\vert \log \left(\frac{x_m h}{\varepsilon}\right) \right \vert}$& $\text{ for }$ & $0<x_m<\frac{\varepsilon}{h} $\\
$1$ & $\text{ for }$ & $x_m=\frac{\varepsilon}{h}$ \\
$\frac{ \left(\frac{x_m h}{\varepsilon}\right)^c}  { \log\left(\frac{x_m h}{\varepsilon}\right) }+ \underset{j=0}{\overset{d-1 } {\sum }} \left(\frac{x_m h}{\varepsilon}\right)^j $ & $\text{ for }$ & $x_m>\frac{\varepsilon}{h}$,%
\end{tabular}
\right.
\]

\[
 I_2(R,m) \leq I_2(m):=K \left\{
\begin{tabular}{lll}
$\frac{ \left(\frac{x_m g}{\varepsilon}\right)^c  }{ \left\vert \log\left(\frac{x_m g}{\varepsilon}\right) \right \vert} +  \left(\frac{x_m g}{\varepsilon}\right)^d$& $\text{ for }$ & $0<x_m<\frac{\varepsilon}{g} $\\
$1$ & $\text{ for }$ & $x_m=\frac{\varepsilon}{g}$ \\
$\frac{ \left(\frac{x_m g}{\varepsilon}\right)^c}  {  \log \left(\frac{x_m g}{\varepsilon} \right) }$ & $\text{ for }$ & $x_m>\frac{\varepsilon}{g}$.%
\end{tabular}
\right.
\]

Recall that for $m \geq m_0$, $x_m <\frac{\varepsilon}{2g}<\frac{\varepsilon}{2h}$. Hence
\[
\underset{m=m_0}{\overset{\infty } {\sum }} I_1(m)+I_2(m) \leq K \underset{x_m < \frac{\varepsilon}{2g}}{\sum } \left( \frac{x_m^c}{\left\vert\log x_m \right\vert }+ x_m^d \right)<\infty,
\]
by (\ref{ap_eq_2}).

We now deal with $I_3(R,m)$. Since $x_m^s \varepsilon^{d-s} \Lambda(s)$ is an entire function, by Cauchy's theorem
\[
\int_{c-\mathbbm{i}R}^{c+\mathbbm{i}R} x_m^s \varepsilon^{d-s} \Lambda(s)  ds=\int_{C_R} x_m^s \varepsilon^{d-s} \Lambda(s)  ds,
\]
where $C_R:C_R(t)=c+R e^{\mathbbm{i}t}$, $-\frac{\pi}{2} \leq t \leq \frac{\pi}{2}$.

Note that for $s=c+R e^{\mathbbm{i}t}$,
\begin{eqnarray*}
\left\vert \Lambda(s) \right\vert& = &\left\vert  \int_h^g u^{s-d-1} \lambda(u) du  \right \vert \\
&\leq& \int_h^g u^{c+R\cos t-d-1} \left\vert \lambda(u)  \right \vert du\\
&\leq& K g^{R \cos t}.
\end{eqnarray*}
Hence,
\begin{eqnarray*}
I_3(R,m)& = &\left\vert \int_{C_R} x_m^s \varepsilon^{d-s} \Lambda(s)  ds \right\vert\\
& =& \left\vert \int_{-\frac{\pi}{2}}^{\frac{\pi}{2}} x_m^{c+R e^{\mathbbm{i}t}} \varepsilon^{d-c-R e^{\mathbbm{i}t}} \Lambda( c+ R e^{\mathbbm{i}t})  \mathbbm{i} R e^{\mathbbm{i}t} dt\right\vert\\
& \leq&K x_m^c R \int_{-\frac{\pi}{2}}^{\frac{\pi}{2}} \left(  \frac{x_m g}{\varepsilon}   \right)^{R \cos t} dt \\
& \leq&K x_m^c R \int_{0}^{\frac{\pi}{2}} \left(  \frac{x_m g}{\varepsilon}   \right)^{R \cos t} dt\\
& \leq&K x_m^c R \int_{0}^{\frac{\pi}{2}} \left(  \frac{x_m g}{\varepsilon}   \right)^{R \sin t} dt.
\end{eqnarray*}

Note that $\sin t \geq \frac{2}{\pi} t $ for $t \in [0,\frac{\pi}{2}]$. Hence, for $x_m<\frac{\varepsilon}{2g}$ we have
\begin{eqnarray*}
I_3(R,m) &\leq & K x_m^c R \int_0^{\frac{\pi}{2}} \left(  \frac{x_m g}{\varepsilon}   \right)^{\frac{2R}{\pi}t} dt\\
&=&K x_m^c R \frac{\pi}{2R \log \left(\frac{x_m g}{\varepsilon} \right)} \left( \left(\frac{x_m g}{\varepsilon}\right)^R -1 \right)\\
&\leq& K \frac{x_m^c}{ \left\vert \log x_m \right\vert},
\end{eqnarray*}
since  $\left( \frac{x_m g}{\varepsilon} \right)^R \rightarrow 0 $ as $R\rightarrow \infty$. Letting $I_3(m)=\frac{K x_m^c}{\left\vert \log x_m\right\vert}$, we obtain $I_3(R,m) \leq I_3(m)$ for all $m \geq m_0$ and
\[
\underset{m=m_0}{\overset{\infty } {\sum }}I_3(m)\leq K \underset{m=m_0}{\overset{\infty } {\sum }}\frac{x_m^c}{\left\vert \log x_m \right\vert}<\infty,
\]
by (\ref{ap_eq_2}).

We set $a_m = I_1(m)+I_2(m)+I_3(m), \text{ } m\geq m_0$. Then both (\ref{ap_eq_6}) and (\ref{ap_eq_7}) are satisfied and our proof is completed.
\end{appendices}

\end{document}